\documentclass[11pt]{amsart}

\usepackage{amsmath,amssymb,amsfonts}
\usepackage{amsthm}
\usepackage{geometry}
\usepackage{hyperref}
\usepackage{enumitem}

\DeclareMathOperator{\csch}{csch}

\newtheorem{theorem}{Theorem}[section]

\newtheorem{proposition}[theorem]{Proposition}
\newtheorem{corollary}[theorem]{Corollary}
\newtheorem{conjecture}[theorem]{Conjecture}

\theoremstyle{definition}

\theoremstyle{remark}
\newtheorem{remark}[theorem]{Remark}

\title[A Heat Kernel Expectation Approach]{
A Heat Kernel Expectation Approach to Boundary-Corrected
Li--Yau Estimates for the Dirichlet Heat Equation
}

\author{Li-Chang Hung}
\address{Department of Mathematics, Soochow University, Taipei, Taiwan}
\email{lichang.hung@gmail.com }
\thanks{
}

\begin{document}

\maketitle

\begin{abstract}

In this paper, we establish an explicit boundary-corrected
Li--Yau type gradient estimate for positive solutions of the
Dirichlet heat equation on the Euclidean half-space. The main idea is to exploit the reflection structure of the Dirichlet heat kernel and introduce a normalized kernel-induced
probability measure. Under this representation, logarithmic
derivatives of the heat kernel become expectations of explicit
kernel quantities. The reflected Gaussian component generates a hyperbolic correction
term involving

\[
\coth\left(\frac{x_n y_n}{2t}\right),
\]
which has no analogue in the whole Euclidean heat equation. Using Jensen's inequality and the sharp estimate
\[
0<z\,\csch z<1 ,
\qquad z>0,
\]
we prove that every positive solution satisfies
\[
\Delta\log w(x,t)
\geq
-\frac n{2t}
-\frac1{x_n^2}.
\]
The first term represents the classical Euclidean Li--Yau diffusion
scaling, while the second term is an explicit inverse-square
correction determined by the distance to the Dirichlet boundary.

Our approach provides a direct kernel interpretation of the
boundary effect and suggests possible extensions to more general
domains.

\end{abstract}

\section{Introduction}

The analysis of heat equations and their gradient estimates is one
of the central topics in partial differential equations, geometric
analysis, and diffusion theory. Heat kernels provide fundamental
information about propagation of diffusion, while differential
Harnack inequalities reveal quantitative relations between spatial
and temporal variations of solutions.

The classical Li--Yau inequality is one of the most influential
results in this direction. For a positive solution $u$ of

\[
u_t=\Delta u ,
\]

on a complete Riemannian manifold, Li and Yau established the
fundamental estimate

\[
\frac{|\nabla u|^2}{u^2}
-
\alpha\frac{u_t}{u}
\leq
\frac{C}{t},
\]

which gives sharp control of the space-time behavior of diffusion
processes. This result has become a basic tool in geometric analysis
and heat kernel theory; see Li--Yau~\cite{LiYau1986}.

The relationship between heat kernel estimates, spectral theory,
and diffusion processes has been extensively developed in the
classical works of Davies~\cite{Davies1989} and Grigor'yan
\cite{Grigor2009}. Moreover, Sobolev inequalities and their
connection with heat propagation have been studied systematically
in the framework of Saloff-Coste~\cite{SaloffCoste2002}.

While the classical Li--Yau inequality describes diffusion on
complete spaces without boundary, the presence of a boundary changes
the structure of the heat kernel. In particular, the Dirichlet heat
kernel contains a reflected Gaussian component corresponding to the
absorbing boundary. This additional term creates new nonlinear
effects which are absent in the whole-space case.

A pioneering analysis of this phenomenon was given by Hamilton
\cite{Hamilton2011}. For the one-dimensional Dirichlet problem

\[
f_t=f_{xx},
\qquad x>0,
\]

with

\[
f(0,t)=0,
\]

Hamilton introduced a nonlinear auxiliary quantity and proved the
sharp estimate

\[
l_{xx}
+\frac1{2t}
+
\frac1{x^2}
F^2
\left(
xl_x+\frac{x^2}{2t}
\right)
\geq0,
\]

where

\[
v=\frac{u}{\tanh u},
\qquad
F(v)=\frac{u}{\sinh u}.
\]

In particular,

\[
l_{xx}
+\frac1{2t}
+\frac1{x^2}
\geq0 .
\]

Hamilton's result demonstrates that the Dirichlet boundary produces
an inverse-square correction term. However, the proof relies on a
nonlinear maximum principle argument, and the analytic origin of the
boundary correction is not directly visible from the heat kernel
structure.

While Hamilton's estimate was obtained through a nonlinear maximum
principle argument, our approach provides a complementary kernel
representation viewpoint, showing that the boundary correction
arises intrinsically from the reflected Gaussian structure of the
Dirichlet heat kernel.

The purpose of the present paper is to provide an explicit kernel
interpretation of this correction mechanism.

We consider the Dirichlet heat equation on the Euclidean half-space

\[
\mathbb R^n_+
=
\{x=(x_1,\ldots,x_n)\in\mathbb R^n:x_n>0\}.
\]

Let $w(x,t)$ satisfy

\[
\begin{cases}
w_t=\Delta w,
&
(x,t)\in\mathbb R^n_+\times(0,\infty),
\\[1mm]
w=0,
&
x_n=0 .
\end{cases}
\]

The Dirichlet heat kernel is obtained by reflection:

\[
K_D(x,y,t)
=
\frac1{(4\pi t)^{n/2}}
\left(
e^{-\frac{|x'-y'|^2+(x_n-y_n)^2}{4t}}
-
e^{-\frac{|x'-y'|^2+(x_n+y_n)^2}{4t}}
\right),
\]

where

\[
x=(x',x_n),
\qquad
y=(y',y_n).
\]

The crucial observation is that the reflected Gaussian term
produces a hyperbolic structure in the normal direction:

\[
\frac{\partial_{x_n}K_D}{K_D}
=
\frac{y_n}{2t}
\coth
\left(
\frac{x_ny_n}{2t}
\right)
-
\frac{x_n}{2t}.
\]

This identity reveals the precise origin of the boundary correction.
Unlike the whole-space Gaussian kernel, the logarithmic derivative
contains an additional nonlinear factor generated by reflection.

Our main result is the following boundary-corrected Li--Yau
estimate:

\[
\boxed{
\Delta\log w(x,t)
\geq
-\frac n{2t}
-\frac1{x_n^2}
}.
\]

The correction term can be written geometrically as

\[
-\frac1{d(x,\partial\mathbb R^n_+)^2},
\]

where

\[
d(x,\partial\mathbb R^n_+)=x_n .
\]

Therefore the estimate separates the universal diffusion effect from
the contribution of the absorbing boundary.

The proof is based on a normalized kernel measure

\[
d\mu_{x,t}(y)
=
\frac{K_D(x,y,t)g(y)}{w(x,t)}\,dy ,
\]

which transforms logarithmic derivatives of the solution into
expectations of explicit kernel quantities. Combining this
representation with Jensen's inequality and the elementary
hyperbolic estimate

\[
0<z\csch z<1
\]

gives the desired boundary correction.

The organization of this paper is as follows. In Section 2 we
introduce the Dirichlet heat kernel representation and the associated
probability measure. Section 3 establishes the main logarithmic
derivative identities. Section 4 proves the boundary-corrected
Li--Yau estimate. Finally, Section 5 discusses interpretations and
possible extensions.

\section{Dirichlet Heat Kernel Representation}

Throughout the paper we write

\[
x=(x_1,\ldots,x_{n-1},x_n)
=(x',x_n),
\]

where

\[
x_n>0.
\]

Let $g\geq0$ be the initial datum. The solution of the Dirichlet
heat equation is represented as

\[
w(x,t)
=
\int_{\mathbb R^n_+}
K_D(x,y,t)g(y)\,dy .
\]

The Dirichlet heat kernel is

\[
K_D(x,y,t)
=
\frac1{(4\pi t)^{n/2}}
e^{-\frac{|x'-y'|^2}{4t}}
\left(
e^{-\frac{(x_n-y_n)^2}{4t}}
-
e^{-\frac{(x_n+y_n)^2}{4t}}
\right).
\]

The difference between the two Gaussian terms encodes the absorbing
boundary condition.

The following identity is the key structural formula:

\[
(x_n+y_n)^2-(x_n-y_n)^2
=
4x_ny_n .
\]

Hence the ratio between the reflected and original Gaussian terms
naturally produces the hyperbolic function

\[
\coth\left(\frac{x_ny_n}{2t}\right).
\]

\subsection{Normalized Kernel Measure}

For fixed $(x,t)$, define the probability measure

\[
d\mu_{x,t}(y)
=
\frac{
K_D(x,y,t)g(y)
}{
w(x,t)
}
\,dy .
\]

Since

\[
w(x,t)
=
\int_{\mathbb R^n_+}
K_D(x,y,t)g(y)\,dy ,
\]

we have

\[
\int_{\mathbb R^n_+}
d\mu_{x,t}(y)=1 .
\]

Therefore, for any integrable function $\Phi(y)$,

\[
\frac1{w(x,t)}
\int_{\mathbb R^n_+}
\Phi(y)
K_D(x,y,t)g(y)\,dy
=
\int_{\mathbb R^n_+}
\Phi(y)d\mu_{x,t}(y).
\]

This probability representation allows us to interpret
logarithmic derivatives of the heat kernel as expectations.

\section{Kernel Logarithmic Derivative Identities}

We first derive the identities generated by the Dirichlet heat
kernel.

\begin{proposition}[Normal logarithmic derivative identity]

For the Dirichlet heat kernel on $\mathbb R^n_+$,

\[
\frac{\partial_{x_n}K_D(x,y,t)}
{K_D(x,y,t)}
=
\frac{y_n}{2t}
\coth
\left(
\frac{x_ny_n}{2t}
\right)
-
\frac{x_n}{2t}.
\]

\end{proposition}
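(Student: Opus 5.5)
The identity is a direct computation, and the plan is to factor the kernel so that only a one-dimensional quantity has to be differentiated. Write
\[
K_D(x,y,t)=\frac1{(4\pi t)^{n/2}}e^{-\frac{|x'-y'|^2}{4t}}\,H(x_n,y_n,t),
\qquad
H=e^{-\frac{(x_n-y_n)^2}{4t}}-e^{-\frac{(x_n+y_n)^2}{4t}}.
\]
The prefactor does not depend on $x_n$, so $\partial_{x_n}K_D/K_D=\partial_{x_n}H/H$. This quotient is well defined because $H>0$ whenever $x_n,y_n>0$: indeed $(x_n+y_n)^2>(x_n-y_n)^2$ there.

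Next I rewrite $H$ in hyperbolic form using the structural identity $(x_n+y_n)^2-(x_n-y_n)^2=4x_ny_n$ from Section 2. Expanding the squares and pulling out the common factor gives
\[
H=e^{-\frac{x_n^2+y_n^2}{4t}}\left(e^{\frac{x_ny_n}{2t}}-e^{-\frac{x_ny_n}{2t}}\right)
=2\,e^{-\frac{x_n^2+y_n^2}{4t}}\sinh\!\left(\frac{x_ny_n}{2t}\right).
\]
Then
\[
\log H=\log 2-\frac{x_n^2+y_n^2}{4t}+\log\sinh\!\left(\frac{x_ny_n}{2t}\right),
\]
which is valid because the $\sinh$ factor is positive.

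Differentiating in $x_n$ gives $-\frac{x_n}{2t}$ from the Gaussian exponent. The chain rule gives $\frac{y_n}{2t}\coth\!\left(\frac{x_ny_n}{2t}\right)$ from the $\log\sinh$ term. Adding the two yields exactly the stated formula.

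There is no real obstacle here. The only points needing care are checking the positivity of $H$, which justifies taking logarithms, and making sure the tangential factor $e^{-|x'-y'|^2/(4t)}$ cancels from the quotient. A direct quotient-rule computation on the two Gaussians would also work, but it would require recognising $\frac{e^{a}+e^{-a}}{e^{a}-e^{-a}}=\coth a$, which the factorization above makes automatic.
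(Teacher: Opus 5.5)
Your proof is correct and is essentially the paper's argument: both strip off the tangential factor $\frac{1}{(4\pi t)^{n/2}}e^{-|x'-y'|^2/(4t)}$ and differentiate the normal factor directly. The only difference is in how the computation is organized. The paper applies the quotient rule and multiplies through by $e^{(x_n-y_n)^2/(4t)}$ to reach $\frac{1+e^{-2z}}{1-e^{-2z}}=\coth z$, whereas your factorization $H=2e^{-(x_n^2+y_n^2)/(4t)}\sinh\bigl(\frac{x_ny_n}{2t}\bigr)$ makes the $\coth$ appear immediately as a logarithmic derivative; your positivity check on $H$ also makes explicit a point the paper leaves implicit.
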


\begin{proof}

Only the normal part of the kernel is involved. Write

\[
K_D(x,y,t)
=
A(x',y',t)
\left(
e^{-\frac{(x_n-y_n)^2}{4t}}
-
e^{-\frac{(x_n+y_n)^2}{4t}}
\right),
\]

where

\[
A(x',y',t)
=
\frac1{(4\pi t)^{n/2}}
e^{-\frac{|x'-y'|^2}{4t}} .
\]

Differentiating with respect to $x_n$ gives

\[
\partial_{x_n}K_D
=
A
\left(
-\frac{x_n-y_n}{2t}
e^{-\frac{(x_n-y_n)^2}{4t}}
+
\frac{x_n+y_n}{2t}
e^{-\frac{(x_n+y_n)^2}{4t}}
\right).
\]

Hence

\[
\frac{\partial_{x_n}K_D}{K_D}
=
\frac{
-\frac{x_n-y_n}{2t}
e^{-\frac{(x_n-y_n)^2}{4t}}
+
\frac{x_n+y_n}{2t}
e^{-\frac{(x_n+y_n)^2}{4t}}
}
{
e^{-\frac{(x_n-y_n)^2}{4t}}
-
e^{-\frac{(x_n+y_n)^2}{4t}}
}.
\]

Multiplying numerator and denominator by

\[
e^{\frac{(x_n-y_n)^2}{4t}},
\]

we obtain

\[
=
\frac{
-\frac{x_n-y_n}{2t}
+
\frac{x_n+y_n}{2t}
e^{-\frac{x_ny_n}{t}}
}
{
1-e^{-\frac{x_ny_n}{t}}
}.
\]

After rearrangement,

\[
=
-\frac{x_n}{2t}
+
\frac{y_n}{2t}
\frac{1+e^{-\frac{x_ny_n}{t}}}
{1-e^{-\frac{x_ny_n}{t}}}.
\]

Since

\[
\frac{1+e^{-2z}}
{1-e^{-2z}}
=
\coth z ,
\]

with

\[
z=\frac{x_ny_n}{2t},
\]

we obtain

\[
\frac{\partial_{x_n}K_D}{K_D}
=
\frac{y_n}{2t}
\coth
\left(
\frac{x_ny_n}{2t}
\right)
-\frac{x_n}{2t}.
\]

\end{proof}

\subsection{Tangential derivatives}

For $i=1,\ldots,n-1$, the tangential directions behave exactly as
the Euclidean heat kernel.

Indeed,

\[
\frac{\partial_{x_i}K_D}{K_D}
=
-\frac{x_i-y_i}{2t}.
\]

Therefore,

\[
\frac{w_{x_i}}w
=
-\frac{x_i}{2t}
+
\frac1{2t}
\int y_i\,d\mu_{x,t}(y).
\]

Consequently,

\[
\left(
\frac{w_{x_i}}w
\right)^2
\leq
\int
\left(
-\frac{x_i-y_i}{2t}
\right)^2
d\mu_{x,t}(y)
\]

by Jensen's inequality.

\section{Main Theorem}

\begin{theorem}[Boundary-Corrected Li--Yau Estimate]

Let

\[
w_t=\Delta w
\]

be a positive solution of the Dirichlet heat equation on

\[
\mathbb R^n_+
=
\{x_n>0\},
\]

with nonnegative initial data.

Then for every

\[
x\in\mathbb R^n_+,
\qquad
t>0,
\]

we have

\[
\boxed{
\Delta\log w(x,t)
\geq
-\frac n{2t}
-\frac1{x_n^2}
}.
\]

Equivalently,

\[
\boxed{
\frac{\Delta w}{w}
-
\frac{|\nabla w|^2}{w^2}
\geq
-\frac n{2t}
-\frac1{x_n^2}
}.
\]

\end{theorem}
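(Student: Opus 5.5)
The plan is to reduce the estimate to a pointwise bound on $\Delta_x\log K_D(x,y,t)$, averaged against the probability measure $\mu_{x,t}$ of Section~2. The mechanism is the standard \emph{log-convexity of mixtures}: a positive superposition of kernels has Hessian of the logarithm bounded below by the average of the kernels' log-Hessians. Write $\ell(y)=\log K_D(x,y,t)$, regarded as a function of $x$ for fixed $y$ and $t$. Differentiating $w=\int K_D g\,dy$ under the integral sign, for each $i=1,\dots,n$ I expect
\[
\frac{\partial_{x_i} w}{w}=\int \partial_{x_i}\ell\,d\mu_{x,t},
\qquad
\frac{\partial_{x_i}^2 w}{w}=\int \bigl(\partial_{x_i}^2\ell+(\partial_{x_i}\ell)^2\bigr)\,d\mu_{x,t},
\]
since $\partial_{x_i}^2K_D/K_D=\partial_{x_i}^2\ell+(\partial_{x_i}\ell)^2$. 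Subtracting gives
\[
\partial_{x_i}^2\log w=\int \partial_{x_i}^2\ell\,d\mu_{x,t}+\Bigl(\int(\partial_{x_i}\ell)^2d\mu_{x,t}-\Bigl(\int\partial_{x_i}\ell\,d\mu_{x,t}\Bigr)^2\Bigr).
\]
The bracket is a variance, hence nonnegative by Jensen's inequality; this is exactly the inequality already noted for the tangential directions, and I apply it in every direction. Summing over $i$ yields $\Delta\log w(x,t)\ge\int\Delta_x\ell\,d\mu_{x,t}(y)$.

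Next I compute $\Delta_x\ell$ explicitly. For the tangential directions $i\le n-1$, the identity $\partial_{x_i}\ell=-(x_i-y_i)/(2t)$ gives $\partial_{x_i}^2\ell=-1/(2t)$. For the normal direction I differentiate the Normal logarithmic derivative identity (the Proposition of Section~3) once more in $x_n$. With $z=x_ny_n/(2t)$, and using $(\coth z)'=-\csch^2 z$, this gives
\[
\partial_{x_n}^2\ell=-\frac{y_n^2}{4t^2}\csch^2 z-\frac1{2t}=-\frac1{x_n^2}\,\bigl(z\csch z\bigr)^2-\frac1{2t}.
\]
The elementary bound $0<z\csch z<1$ for $z>0$ then yields $\partial_{x_n}^2\ell>-\frac1{x_n^2}-\frac1{2t}$ pointwise in $y$. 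Summing the $n$ directions gives $\Delta_x\ell\ge -\frac n{2t}-\frac1{x_n^2}$ for every $y\in\mathbb R^n_+$. Integrating against the probability measure $\mu_{x,t}$ gives the claim. The second boxed form follows from the identity $\Delta\log w=\Delta w/w-|\nabla w|^2/w^2$.

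The computations themselves are routine. The only step I expect to need care is the justification of differentiating twice under the integral and of the finiteness of the second moments of $\partial_{x_i}\ell$ under $\mu_{x,t}$. I would handle this by assuming, as is implicit in the representation formula, that $g$ grows at most like $e^{c|y|^2}$ with $t$ small enough, or more simply that $g$ has sufficient decay. Then on compact sets of $(x,t)$ the derivatives of $K_D$ are dominated by polynomial multiples of a Gaussian, and dominated convergence applies. I must also check that $\partial_{x_n}\ell$ stays integrable near $y_n=0$. There $K_D$ vanishes linearly in $y_n$, but $\partial_{x_n}K_D$ is bounded, and $\frac{y_n}{2t}\coth z\to\frac1{x_n}$ as $y_n\to0$, so no singularity arises. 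Positivity of $w$ is assumed, so $\mu_{x,t}$ is a well-defined probability measure.
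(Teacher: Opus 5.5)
Your proposal is correct and follows essentially the same route as the paper: average the kernel's log-Hessian against $\mu_{x,t}$, discard the nonnegative variance (the paper's Jensen step), and bound the normal term $-\frac1{2t}-\frac{y_n^2}{4t^2}\csch^2\frac{x_ny_n}{2t}$ using $0<z\csch z<1$. Obtaining that term by differentiating the normal log-derivative identity, instead of computing $\partial_{x_nx_n}K_D/K_D$ and subtracting the square as the paper does, is only a cosmetic simplification; your remarks on differentiating under the integral sign (with the growth assumption on $g$ that the paper leaves implicit in its representation formula) fill in a point the paper skips.
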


\section{Proof of the Main Theorem}

We compute each component of

\[
\Delta\log w
=
\sum_{i=1}^{n}
\left(
\frac{w_{x_ix_i}}w
-
\frac{w_{x_i}^2}{w^2}
\right).
\]

For each coordinate,

\[
\frac{w_{x_ix_i}}w
=
\int
\frac{\partial_{x_ix_i}K_D}
{K_D}
d\mu_{x,t}(y).
\]

\subsection{Tangential contribution}

For $i<n$,

\[
\partial_{x_ix_i}K_D
=
\left(
\frac{(x_i-y_i)^2}{4t^2}
-\frac1{2t}
\right)K_D .
\]

Hence,

\[
\frac{w_{x_ix_i}}w
=
\int
\left(
\frac{(x_i-y_i)^2}{4t^2}
-\frac1{2t}
\right)
d\mu_{x,t}.
\]

Using Jensen,

\[
\frac{w_{x_ix_i}}w
-
\frac{w_{x_i}^2}{w^2}
\geq
-\frac1{2t}.
\]

Summing over

\[
i=1,\ldots,n-1,
\]

gives

\[
\sum_{i=1}^{n-1}
\left(
\frac{w_{x_ix_i}}w
-
\frac{w_{x_i}^2}{w^2}
\right)
\geq
-\frac{n-1}{2t}.
\]

\subsection{Normal contribution}

For the normal variable,

\[
\partial_{x_nx_n}K_D
=
\frac1{4t^2}
\left(
(x_n-y_n)^2-\!2t
\right)
e^{-\frac{(x_n-y_n)^2}{4t}}
\]

\[
-
\frac1{4t^2}
\left(
(x_n+y_n)^2-\!2t
\right)
e^{-\frac{(x_n+y_n)^2}{4t}} .
\]

After dividing by $K_D$ and simplifying, we obtain

\[
\frac{\partial_{x_nx_n}K_D}
{K_D}
=
\frac1{4t^2}
\left(
-2t+x_n^2+y_n^2
-
2x_ny_n
\coth
\left(
\frac{x_ny_n}{2t}
\right)
\right).
\]

Therefore,

\[
\frac{w_{x_nx_n}}w
=
\int
\frac1{4t^2}
\left(
-2t+x_n^2+y_n^2
-
2x_ny_n
\coth
\frac{x_ny_n}{2t}
\right)
d\mu .
\]

On the other hand,

\[
\frac{w_{x_n}}w
=
\int
\left(
\frac{y_n}{2t}
\coth
\frac{x_ny_n}{2t}
-
\frac{x_n}{2t}
\right)
d\mu .
\]

Applying Jensen,

\[
\left(\frac{w_{x_n}}w\right)^2
\leq
\int
\left(
\frac{y_n}{2t}
\coth
\frac{x_ny_n}{2t}
-
\frac{x_n}{2t}
\right)^2
d\mu .
\]

Hence,

\[
\begin{aligned}
&
\frac{w_{x_nx_n}}w
-
\frac{w_{x_n}^2}{w^2}
\\
&\geq
\int
\frac1{4t^2}
\left(
-2t
+
y_n^2
-
y_n^2
\coth^2
\frac{x_ny_n}{2t}
\right)
d\mu .
\end{aligned}
\]

Using

\[
1-\coth^2 z
=
-\csch^2 z ,
\]

we get

\[
\begin{aligned}
&
\frac{w_{x_nx_n}}w
-
\frac{w_{x_n}^2}{w^2}
\\
&\geq
\int
\left(
-\frac1{2t}
-
\frac{y_n^2}{4t^2}
\csch^2
\frac{x_ny_n}{2t}
\right)d\mu .
\end{aligned}
\]

Now

\[
\frac{y_n^2}{4t^2}
\csch^2
\left(
\frac{x_ny_n}{2t}
\right)
=
\frac1{x_n^2}
\left(
\frac{x_ny_n}{2t}
\csch
\frac{x_ny_n}{2t}
\right)^2 .
\]

Since

\[
0<z\csch z<1,
\]

we obtain

\[
\frac{y_n^2}{4t^2}
\csch^2
\left(
\frac{x_ny_n}{2t}
\right)
\leq
\frac1{x_n^2}.
\]

Therefore,

\[
\frac{w_{x_nx_n}}w
-
\frac{w_{x_n}^2}{w^2}
\geq
-\frac1{2t}
-\frac1{x_n^2}.
\]

Combining tangential and normal estimates,

\[
\Delta\log w
\geq
-\frac{n-1}{2t}
-\frac1{2t}
-\frac1{x_n^2}.
\]

Hence,

\[
\boxed{
\Delta\log w
\geq
-\frac n{2t}
-\frac1{x_n^2}
}.
\]

The proof is complete.

\hfill$\square$

\section{Boundary-Corrected Harnack Inequality}

The differential Li--Yau estimate obtained in the previous section
immediately yields an integrated Harnack inequality.

Let

\[
l(x,t)=\log w(x,t).
\]

Since

\[
w_t=\Delta w,
\]

we have

\[
l_t
=
\frac{w_t}{w}
=
\frac{\Delta w}{w}.
\]

On the other hand,

\[
\Delta l
=
\frac{\Delta w}{w}
-
\frac{|\nabla w|^2}{w^2},
\]

and therefore

\[
l_t
=
\Delta l+|\nabla l|^2 .
\]

Combining this identity with the boundary-corrected Li--Yau
estimate

\[
\Delta l
\geq
-\frac n{2t}
-\frac1{x_n^2},
\]

we obtain

\[
l_t
\geq
|\nabla l|^2
-\frac n{2t}
-\frac1{x_n^2}.
\]

The following theorem gives the corresponding integral Harnack
inequality.


\begin{theorem}[Boundary-corrected Harnack inequality]

Let $w$ be a positive solution of the Dirichlet heat equation

\[
w_t=\Delta w
\]

on the Euclidean half-space

\[
\mathbb R^n_+
=
\{x_n>0\},
\]

and assume that

\[
\Delta\log w(x,t)
\geq
-\frac n{2t}
-\frac1{x_n^2}.
\]

Then for every smooth curve

\[
\gamma(t)=(x(t),t),
\qquad
t\in[t_1,t_2],
\]

with

\[
x(t)\in\mathbb R^n_+,
\]

we have

\[
\begin{aligned}
\log
\frac{w(x(t_2),t_2)}
     {w(x(t_1),t_1)}
\geq&
-\frac14
\int_{t_1}^{t_2}
|x'(t)|^2\,dt
\\
&-\frac n2
\log\frac{t_2}{t_1}
\\
&-
\int_{t_1}^{t_2}
\frac1{x_n(t)^2}\,dt .
\end{aligned}
\]

\end{theorem}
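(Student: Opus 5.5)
Our plan is the standard Li--Yau path integration. Set $l=\log w$ and $\phi(t)=l(x(t),t)$ along the curve, where $\dot x(t)$ denotes the velocity of the curve; the paper's notation $x'(t)$ for this velocity clashes with the tangential variable $x'$, so we write $\dot x$ to avoid confusion. By the chain rule,
\[
\frac{d\phi}{dt}
=
l_t(x(t),t)+\nabla l(x(t),t)\cdot \dot x(t).
\]
The differential inequality already derived just before the statement, obtained by combining the hypothesis with $l_t=\Delta l+|\nabla l|^2$, gives
\[
l_t\geq |\nabla l|^2-\frac n{2t}-\frac1{x_n^2}.
\]
Substituting this at the point $(x(t),t)$ yields
\[
\frac{d\phi}{dt}\geq |\nabla l|^2+\nabla l\cdot\dot x-\frac n{2t}-\frac1{x_n(t)^2}.
\]

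The key step is to eliminate the unknown gradient by completing the square:
\[
|\nabla l|^2+\nabla l\cdot \dot x
=
\Bigl|\nabla l+\tfrac12\dot x\Bigr|^2-\tfrac14|\dot x|^2
\geq -\tfrac14|\dot x|^2 .
\]
This gives the pointwise-in-time bound
\[
\frac{d\phi}{dt}\geq -\frac14|\dot x(t)|^2-\frac n{2t}-\frac1{x_n(t)^2}.
\]
We then integrate from $t_1$ to $t_2$, using $\int_{t_1}^{t_2}\frac{n}{2t}\,dt=\frac n2\log\frac{t_2}{t_1}$. Since $\phi(t_2)-\phi(t_1)=\log\frac{w(x(t_2),t_2)}{w(x(t_1),t_1)}$, this is exactly the claimed inequality.

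The only real obstacle is justifying the steps. We need $w>0$ and $w$ smooth in the interior, so that $l$ is $C^1$ along the curve; this holds since the curve stays in $\mathbb R^n_+$ and $t_1>0$. We also need the integrals to be finite. Because $x_n(t)$ is continuous and positive on the compact interval $[t_1,t_2]$, it is bounded below, so $\int_{t_1}^{t_2} x_n^{-2}\,dt<\infty$ and the fundamental theorem of calculus applies to $\phi$. No use of the kernel representation is needed beyond the assumed differential inequality.
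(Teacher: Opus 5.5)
Your proposal is correct and follows the paper's proof essentially step for step. Both arguments use the chain rule along the curve, then the identity $l_t=\Delta l+|\nabla l|^2$ together with the assumed lower bound on $\Delta l$, then completing the square $|\nabla l|^2+\nabla l\cdot\dot x=|\nabla l+\tfrac12\dot x|^2-\tfrac14|\dot x|^2$, and finally integration from $t_1$ to $t_2$; your regularity remarks (smoothness of $w$ since $t_1>0$, and the positive lower bound on $x_n(t)$ over the compact interval) fill in points the paper leaves implicit.
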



\begin{proof}

Let

\[
l=\log w .
\]

Along a smooth curve $x(t)$ in the half-space, we have

\[
\frac{d}{dt}l(x(t),t)
=
l_t+\nabla l\cdot x'(t).
\]

Using the differential inequality above,

\[
l_t
\geq
|\nabla l|^2
-\frac n{2t}
-\frac1{x_n^2},
\]

we obtain

\[
\frac{d}{dt}l(x(t),t)
\geq
|\nabla l|^2
+
\nabla l\cdot x'(t)
-
\frac n{2t}
-\frac1{x_n^2}.
\]

The first two terms can be estimated by completing the square:

\[
|\nabla l|^2
+
\nabla l\cdot x'
=
\left|
\nabla l+\frac12x'
\right|^2
-
\frac14|x'|^2 .
\]

Therefore,

\[
|\nabla l|^2
+
\nabla l\cdot x'
\geq
-\frac14|x'|^2 .
\]

Consequently,

\[
\frac{d}{dt}l(x(t),t)
\geq
-\frac14|x'(t)|^2
-\frac n{2t}
-\frac1{x_n(t)^2}.
\]

Integrating from $t_1$ to $t_2$ gives

\[
\begin{aligned}
l(x(t_2),t_2)-l(x(t_1),t_1)
\geq&
-\frac14
\int_{t_1}^{t_2}
|x'(t)|^2dt
\\
&
-\frac n2
\int_{t_1}^{t_2}
\frac1t dt
\\
&
-
\int_{t_1}^{t_2}
\frac1{x_n(t)^2}dt .
\end{aligned}
\]

Since

\[
\int_{t_1}^{t_2}\frac1t dt
=
\log\frac{t_2}{t_1},
\]

and

\[
l=\log w,
\]

we obtain

\[
\begin{aligned}
\log
\frac{w(x(t_2),t_2)}
     {w(x(t_1),t_1)}
\geq&
-\frac14
\int_{t_1}^{t_2}
|x'(t)|^2dt
\\
&
-\frac n2
\log\frac{t_2}{t_1}
\\
&
-
\int_{t_1}^{t_2}
\frac1{x_n(t)^2}dt .
\end{aligned}
\]

This proves the theorem.

\end{proof}


\begin{remark}

When the boundary correction is removed, namely in the whole
Euclidean space, the above estimate reduces to the classical
Li--Yau Harnack inequality

\[
\log
\frac{u(x_2,t_2)}
     {u(x_1,t_1)}
\geq
-\frac{|x_2-x_1|^2}{4(t_2-t_1)}
-\frac n2
\log\frac{t_2}{t_1}.
\]

The additional term

\[
-\int_{t_1}^{t_2}
\frac1{x_n(t)^2}\,dt
\]

is the explicit contribution of the absorbing boundary.

\end{remark}


\begin{remark}

The boundary correction term has a geometric interpretation.

Since

\[
x_n=d(x,\partial\mathbb R^n_+),
\]

the Harnack inequality can be written as

\[
-\int_{t_1}^{t_2}
\frac1{
d(x(t),\partial\mathbb R^n_+)^2
}
dt .
\]

Thus the influence of the boundary becomes stronger when the
diffusion path approaches the boundary.

This suggests a natural extension to general convex domains,
where the distance function to the boundary is expected to replace
$x_n$.

\end{remark}

\section{Optimal Paths and Boundary Geodesics}

The Harnack inequality obtained in the previous section naturally
leads to a variational problem describing the optimal paths of heat
propagation.

Indeed, the boundary-corrected Harnack inequality gives

\[
\log
\frac{w(x(t_2),t_2)}
     {w(x(t_1),t_1)}
\geq
-\int_{t_1}^{t_2}
\left(
\frac14 |x'(t)|^2
+
\frac1{x_n(t)^2}
\right)dt
-
\frac n2
\log\frac{t_2}{t_1}.
\]

Therefore, the optimal path connecting two points is determined by
minimizing the action functional

\[
\mathcal A[x]
=
\int_{t_1}^{t_2}
\left(
\frac14 |x'(t)|^2
+
\frac1{x_n(t)^2}
\right)dt .
\]

The first term represents the classical kinetic energy of diffusion,
while the second term is a new potential term generated by the
Dirichlet boundary.

This variational problem can be viewed as a boundary-corrected
geodesic problem associated with the Li--Yau estimate.

\subsection{Euler--Lagrange equation}

Let

\[
L(x,x')
=
\frac14|x'|^2+\frac1{x_n^2}.
\]

The Euler--Lagrange equation is

\[
\frac{d}{dt}
\frac{\partial L}{\partial x_i'}
-
\frac{\partial L}{\partial x_i}
=0.
\]

For the tangential directions

\[
i=1,\ldots,n-1,
\]

we have

\[
\frac{\partial L}{\partial x_i'}
=
\frac12 x_i',
\]

and

\[
\frac{\partial L}{\partial x_i}=0 .
\]

Hence,

\[
\boxed{
x_i''=0,
\qquad i=1,\ldots,n-1 .
}
\]

Therefore, the motion parallel to the boundary is a straight line
with constant velocity.

The nontrivial behavior occurs in the normal direction.

\subsection{Normal boundary direction}

For the normal component,

\[
x_n=d(x,\partial\mathbb R^n_+),
\]

the reduced Lagrangian is

\[
L_n
=
\frac14(x_n')^2
+
\frac1{x_n^2}.
\]

The Euler--Lagrange equation becomes

\[
\frac d{dt}
\frac{\partial L_n}{\partial x_n'}
-
\frac{\partial L_n}{\partial x_n}
=0 .
\]

Since

\[
\frac{\partial L_n}{\partial x_n'}
=
\frac12x_n',
\]

we have

\[
\frac d{dt}
\frac{\partial L_n}{\partial x_n'}
=
\frac12x_n'' .
\]

Moreover,

\[
\frac{\partial L_n}{\partial x_n}
=
-2x_n^{-3}.
\]

Consequently,

\[
\frac12x_n''
-
(-2x_n^{-3})
=
0 .
\]

Hence the optimal normal paths satisfy

\[
\boxed{
x_n''
+
4x_n^{-3}
=
0 .
}
\]

This equation coincides with the optimal path equation obtained by
Hamilton for the one-dimensional Dirichlet heat equation
\cite{Hamilton2011}.

\subsection{Energy conservation}

The equation

\[
x_n''
+
4x_n^{-3}
=
0
\]

admits a conserved quantity.

Multiplying by $x_n'$ gives

\[
x_n'x_n''
+
4x_n^{-3}x_n'
=
0 .
\]

Since

\[
x_n'x_n''
=
\frac12
\frac d{dt}(x_n')^2,
\]

and

\[
x_n^{-3}x_n'
=
-\frac12
\frac d{dt}x_n^{-2},
\]

we obtain

\[
\frac d{dt}
\left(
\frac12(x_n')^2
-
2x_n^{-2}
\right)
=0 .
\]

Therefore,

\[
\boxed{
\frac12(x_n')^2
-
\frac2{x_n^2}
=
C ,
}
\]

or equivalently,

\[
\boxed{
(x_n')^2
-
\frac4{x_n^2}
=
C .
}
\]

Thus the optimal boundary-normal paths are characterized by the
same conserved energy relation appearing in Hamilton's analysis.

In particular, the absorbing boundary creates a repulsive inverse
square potential

\[
V(x_n)=\frac1{x_n^2},
\]

which prevents finite-action paths from crossing the boundary.

\subsection{Geometric interpretation}

The above calculation shows that the Li--Yau correction term

\[
-\frac1{x_n^2}
\]

has a direct geometric meaning.

It produces an effective potential in the Harnack variational
principle:

\[
\mathcal A[x]
=
\int
\left(
\frac14|x'|^2
+
\frac1{
d(x,\partial\mathbb R^n_+)^2
}
\right)dt .
\]

Hence the boundary influence is not merely an analytic correction
term, but modifies the geometry of optimal diffusion paths.

This observation suggests a natural extension to general convex
domains, where one expects the Euclidean distance function

\[
d(x,\partial\Omega)
\]

to replace $x_n$ and the corresponding optimal paths to satisfy a
boundary-modified geodesic equation.

\section{Sharpness and Hamilton Recovery}

We next discuss the sharpness of the boundary correction term and
the relation with Hamilton's one-dimensional estimate.

\subsection{Sharpness of the boundary correction}

\begin{proposition}[Sharpness of the inverse-square correction]

The boundary correction term

\[
-\frac1{|x|^2}
\]

in Theorem~3.1 is of optimal order near the Dirichlet boundary.

More precisely, for the Dirichlet heat kernel itself, the quantity

\[
\Delta \log K_D(x,y,t)
+
\frac n{2t}
\]

has an inverse-square singular behavior as
$x\rightarrow \partial\mathbb R^n_+$.

\end{proposition}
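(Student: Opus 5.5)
The plan is to compute $\Delta\log K_D(x,y,t)$ explicitly for fixed $y\in\mathbb R^n_+$ and $t>0$, using the logarithmic derivative identities of Section 3, and then read off its behaviour as $x_n\to0$. Throughout, I read the correction term $-1/|x|^2$ in the statement as $-1/x_n^2=-1/d(x,\partial\mathbb R^n_+)^2$, as in the main theorem.

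For the tangential directions $i<n$ I will use $\partial_{x_i}\log K_D=-(x_i-y_i)/(2t)$. Differentiating once more gives $\partial_{x_ix_i}\log K_D=-1/(2t)$, with no boundary effect. For the normal direction I will differentiate the normal logarithmic derivative identity $\partial_{x_n}\log K_D=\frac{y_n}{2t}\coth z-\frac{x_n}{2t}$, where $z=\frac{x_ny_n}{2t}$. Using $\frac{d}{dz}\coth z=-\csch^2 z$ and $\partial_{x_n}z=y_n/(2t)$, this gives
\[
\partial_{x_nx_n}\log K_D=-\frac1{2t}-\frac{y_n^2}{4t^2}\csch^2 z .
\]
Summing over all directions yields the exact identity
\[
\Delta\log K_D(x,y,t)+\frac n{2t}
=-\frac{y_n^2}{4t^2}\csch^2\!\left(\frac{x_ny_n}{2t}\right)
=-\frac1{x_n^2}\Bigl(z\csch z\Bigr)^2 .
\]
This is consistent with the proof of the main theorem. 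There, in the pointwise case, the Jensen step is an equality because the measure $\mu_{x,t}$ degenerates to a point mass at $y$.

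Next I let $x_n\to0^+$ with $y$ and $t$ fixed. Then $z\to0$, and since $z\csch z\to1$, I obtain
\[
x_n^2\Bigl(\Delta\log K_D+\frac n{2t}\Bigr)\longrightarrow-1 .
\]
So the quantity has exactly an inverse-square singularity, with the same constant $1$ as in the main theorem. This proves the ``more precisely'' part.

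To justify that the correction is optimal (not merely a singularity of the same order), I will take initial data $g_\varepsilon\ge0$ forming an approximate identity concentrated at $y$. The corresponding solutions $w_\varepsilon$ converge to $K_D(\cdot,y,t)$ locally in $C^2$ for $t>0$. The main theorem holds for each $w_\varepsilon$, and the estimate passes to the limit. By the asymptotics above, the limit attains the bound up to a factor $1+o(1)$ as $x_n\to0$. Hence no bound of the form $-\frac n{2t}-\frac{c}{x_n^2}$ with $c<1$ can hold uniformly.

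The main obstacle I expect is this last limiting step, since $K_D(\cdot,y,t)$ itself corresponds to a Dirac initial datum rather than to an admissible $g$. I would handle it by the standard local uniform convergence of derivatives of heat kernels away from $t=0$. The derivative computations themselves are routine.
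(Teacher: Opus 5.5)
Your proof is correct, but it reaches the conclusion by a different route from the paper's. You also read the misprinted $-1/|x|^2$ as the intended $-1/x_n^2$.

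The paper uses only the boundary asymptotics $K_D(x,y,t)\sim C(y,t)\,x_n$. It writes $\log K_D=\log x_n+\log C(y,t)+o(1)$ and differentiates twice in $x_n$ to get $\partial_{x_nx_n}\log K_D\sim-1/x_n^2$. You instead differentiate the exact normal identity $\partial_{x_n}\log K_D=\frac{y_n}{2t}\coth z-\frac{x_n}{2t}$ once more. This gives the closed form $\Delta\log K_D+\frac n{2t}=-x_n^{-2}(z\csch z)^2$ with $z=x_ny_n/(2t)$.

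Your version buys rigor and precision in three ways:
\begin{itemize}
\item It never differentiates an $o(1)$ remainder. The paper leaves that step unjustified; it can be repaired by factoring $K_D=x_n h$ with $h$ smooth and positive up to $x_n=0$.
\item It shows the tangential directions contribute exactly $-\frac{n-1}{2t}$, so the singularity is purely normal.
\item It pins down the coefficient $1$. That is sharpness of the constant, not merely of the order, which is all the paper claims.
\end{itemize}

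The paper's route is shorter and less tied to the explicit half-space kernel. It only uses first-order vanishing at the boundary, so it applies verbatim to any positive solution that vanishes to exactly first order at $x_n=0$.

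In your optimality step, the thing to pass to the limit is a hypothetical improved bound with $c<1$, not the main theorem. That bound would then contradict $x_n^2(\Delta\log K_D+\frac n{2t})\to-1$. You can avoid the Dirac-datum issue entirely by taking $w(x,s)=K_D(x,y,s+t_0)$. This is a genuine positive solution with nonnegative initial datum $K_D(\cdot,y,t_0)$. For it, $x_n^2(\Delta\log w+\frac n{2s})\to-1$ still holds, since the time shift changes $\frac n{2s}$ only by a bounded amount.
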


\begin{proof}

Consider the Dirichlet heat kernel

\[
K_D(x,y,t)
=
\frac1{(4\pi t)^{n/2}}
\left(
e^{-\frac{|x-y|^2}{4t}}
-
e^{-\frac{|x+y|^2}{4t}}
\right).
\]

Near the boundary, the reflected Gaussian expansion gives

\[
K_D(x,y,t)
\sim
C(y,t)x_n ,
\qquad x_n\rightarrow0 .
\]

Hence

\[
\log K_D(x,y,t)
=
\log x_n+\log C(y,t)+o(1).
\]

Differentiating twice in the normal direction yields

\[
\partial_{x_nx_n}
\log K_D(x,y,t)
\sim
-\frac1{x_n^2}.
\]

Therefore the logarithmic Hessian necessarily contains an
inverse-square singularity generated by the vanishing Dirichlet
boundary condition.

Consequently, the correction term

\[
-\frac1{|x|^2}
\]

has the correct scaling order and cannot in general be removed.

\end{proof}

\begin{remark}

The inverse-square correction is consistent with the scaling
properties of the heat equation.

Under the parabolic scaling
\[
x\mapsto \lambda x,
\qquad
t\mapsto \lambda^2t ,
\]
the two quantities
\[
\frac1t
\qquad\text{and}\qquad
\frac1{|x|^2}
\]
have the same homogeneity.

Therefore the additional boundary term is the natural
scale-invariant correction associated with a codimension-one
absorbing boundary.

\end{remark}

\subsection{Recovery of Hamilton's estimate}

We now show that the classical one-dimensional result of Hamilton is
contained in our theorem.

\begin{corollary}[Hamilton recovery]

Let $w(x,t)$ be a positive solution of the Dirichlet heat equation
on the half-line

\[
(0,\infty)\times(0,\infty),
\]

namely,

\[
w_t=w_{xx},
\qquad
w(0,t)=0 .
\]

Then

\[
(\log w)_{xx}
+
\frac1{2t}
+
\frac1{x^2}
\geq0 .
\]

Equivalently,

\[
\boxed{
l_{xx}
+
\frac1{2t}
+
\frac1{x^2}
\geq0 ,
}
\]

where

\[
l=\log w .
\]

This coincides with the simplified Li--Yau estimate obtained by
Hamilton.

\end{corollary}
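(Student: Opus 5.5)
The plan is to obtain the corollary as the case $n=1$ of the main theorem (Theorem~3.1), or, more transparently, to rerun the normal-direction part of its proof, which never used the tangential variables. On the half-line the Dirichlet kernel is
\[
K_D(x,y,t)=\frac{1}{(4\pi t)^{1/2}}\left(e^{-\frac{(x-y)^2}{4t}}-e^{-\frac{(x+y)^2}{4t}}\right),
\]
and the positive solution with nonnegative initial datum $g$ is $w(x,t)=\int_0^\infty K_D(x,y,t)g(y)\,dy$. I will use the normalized measure $d\mu_{x,t}$ of Section~2, which is a probability measure because $K_D>0$ for $x,y>0$.

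First I will invoke the normal logarithmic derivative identity (the Proposition in Section~3) with $n=1$:
\[
\frac{\partial_x K_D}{K_D}=\frac{y}{2t}\coth\frac{xy}{2t}-\frac{x}{2t}.
\]
Next I will use the second-derivative formula from the normal contribution in the proof of the main theorem:
\[
\frac{\partial_{xx}K_D}{K_D}=\frac{1}{4t^2}\left(-2t+x^2+y^2-2xy\coth\frac{xy}{2t}\right).
\]
Differentiating under the integral gives $w_x/w$ and $w_{xx}/w$ as $\mu_{x,t}$-expectations of these two quantities. Jensen's inequality (equivalently, $\mathrm{Var}_\mu\ge0$) bounds $(w_x/w)^2$ by the expectation of the square. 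Subtracting, the cross terms cancel exactly as in the main proof. This leaves
\[
l_{xx}\ge\int\left(-\frac1{2t}-\frac{y^2}{4t^2}\csch^2\frac{xy}{2t}\right)d\mu.
\]
Finally, I rewrite the last term as $x^{-2}(z\csch z)^2$ with $z=xy/(2t)$. The bound $0<z\csch z<1$ then gives $l_{xx}\ge -\frac1{2t}-\frac1{x^2}$, which is the claim.

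The only step needing care is justifying differentiation under the integral sign. For nonnegative $g$ one can assume the integral converges, as implicitly assumed throughout the paper; for instance, Gaussian growth control on $g$ is enough on compact $(x,t)$-sets. The same assumption ensures that $\mu_{x,t}$ has the finite second moments Jensen requires. I do not expect a genuine obstacle, since the argument is literally the normal-direction computation of Theorem~3.1 with the tangential sum empty. The remark that this "coincides with Hamilton's simplified estimate" then follows by comparing it with the inequality $l_{xx}+\frac1{2t}+\frac1{x^2}\ge0$ quoted in the introduction.
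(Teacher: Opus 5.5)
Your proposal is correct and takes the same route as the paper. The paper's proof just sets $n=1$ in Theorem~3.1: the tangential sum is empty, $\Delta=\partial_{xx}$, and only the normal-direction computation you reproduce remains (Jensen, then $z\csch z<1$). Like the paper, you assume the representation $w=\int_0^\infty K_D(x,y,t)g(y)\,dy$ with $g\ge0$, and your remark on differentiating under the integral sign is a reasonable extra justification that the paper leaves implicit.
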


\begin{proof}

When

\[
n=1,
\]

the half-space becomes the half-line

\[
\mathbb R_+=(0,\infty).
\]

Theorem~3.1 gives

\[
\Delta\log w
\geq
-\frac1{2t}
-\frac1{x^2}.
\]

Since in one dimension

\[
\Delta=\partial_{xx},
\]

we obtain

\[
(\log w)_{xx}
\geq
-\frac1{2t}
-\frac1{x^2}.
\]

Rearranging gives

\[
(\log w)_{xx}
+
\frac1{2t}
+
\frac1{x^2}
\geq0 .
\]

This is precisely Hamilton's simplified boundary Li--Yau
estimate.

\end{proof}

\begin{remark}

Hamilton obtained the one-dimensional estimate by a nonlinear
maximum principle applied to the quantity

\[
l_{xx}
+\frac1{2t}
+\frac1{x^2}F^2
\left(
xl_x+\frac{x^2}{2t}
\right),
\]
where
\[
v=\frac{u}{\tanh u},
\qquad
F(v)=\frac{u}{\sinh u}.
\]

The present approach provides an alternative kernel-based
interpretation: the same inverse-square correction emerges
directly from the reflected heat kernel and the hyperbolic identity
\[
1-\coth^2 z=-\csch^2z .
\]

Thus Hamilton's boundary correction is not only a consequence of
the maximum principle structure but also a direct manifestation of
the geometry of the Dirichlet heat kernel.

\end{remark}

\section{Remarks and Discussion}

\begin{remark}[Comparison with the classical Li--Yau estimate]

For the Euclidean heat equation without boundary,

\[
u_t=\Delta u
\]
with the Gaussian kernel,
\[
\log u
=
-\frac{|x-y|^2}{4t}
-\frac n2\log(4\pi t),
\]
and hence
\[
\Delta\log u
=
-\frac n{2t}.
\]

Therefore, the first term in our estimate,
\[
-\frac n{2t},
\]
is exactly the classical Euclidean Li--Yau diffusion contribution. The additional term
\[
-\frac1{x_n^2}
\]
is entirely generated by the Dirichlet boundary.

\end{remark}

\begin{remark}[Geometric interpretation of the correction term]

The correction term can be written as

\[
-\frac1{x_n^2}
=
-\frac1{
d(x,\partial\mathbb R^n_+)^2
},
\]
where

\[
d(x,\partial\mathbb R^n_+)=x_n
\]
is the distance from $x$ to the boundary.

Hence the estimate takes the geometric form

\[
\Delta\log w
+
\frac n{2t}
+
\frac1{
d(x,\partial\mathbb R^n_+)^2
}
\geq0 .
\]

This shows that the influence of an absorbing boundary decays
quadratically with the distance from the boundary.

\end{remark}

\begin{remark}[Origin of the boundary correction]

The inverse-square correction originates from the reflected Gaussian
term in the Dirichlet heat kernel:

\[
e^{-\frac{(x_n+y_n)^2}{4t}}.
\]

This reflection term modifies the logarithmic derivative of the heat
kernel by introducing

\[
\coth
\left(
\frac{x_ny_n}{2t}
\right),
\]

which does not appear for the ordinary Gaussian heat kernel.

The identity

\[
1-\coth^2 z
=
-\csch^2 z
\]

then converts this nonlinear hyperbolic contribution into the
explicit boundary correction.

\end{remark}

\begin{remark}[Relation with Hamilton's estimate]

Hamilton's one-dimensional result gives the sharper inequality

\[
l_{xx}
+\frac1{2t}
+
\frac1{x^2}
F^2
\left(
xl_x+\frac{x^2}{2t}
\right)
\geq0,
\]

where the function $F$ is defined implicitly by

\[
v=\frac u{\tanh u},
\qquad
F(v)=\frac u{\sinh u}.
\]

The present estimate may be regarded as a multidimensional
kernel-based counterpart:

\[
\Delta\log w
+
\frac n{2t}
+
\frac1{x_n^2}
\geq0.
\]

The two approaches emphasize different aspects.

Hamilton's method reveals sharp nonlinear structure through the
maximum principle, whereas the present approach reveals the analytic
origin of the correction term directly from the reflected heat
kernel.

\end{remark}

\begin{remark}[Possible extensions]

A natural question is whether the boundary correction obtained here
can be extended to more general domains.

For a smooth domain $\Omega$, one expects an estimate of the form

\[
\Delta\log w
+
\frac n{2t}
+
\frac{C}{d(x,\partial\Omega)^2}
\geq0 ,
\]

where

\[
d(x,\partial\Omega)
\]

denotes the distance to the boundary.

Such estimates would connect kernel methods with geometric boundary
Harnack inequalities and Bakry--Émery type curvature techniques.

\end{remark}

\section{Relation with Hamilton's Sharp Boundary Correction}

The estimate obtained in this paper recovers the simplified
Hamilton boundary Li--Yau inequality. It is natural to ask whether
the sharper nonlinear correction introduced by Hamilton can also be
understood from the Dirichlet heat kernel representation.

For the one-dimensional half-line, the Dirichlet heat kernel is

\[
K_D(x,y,t)
=
\frac1{\sqrt{4\pi t}}
\left(
e^{-\frac{(x-y)^2}{4t}}
-
e^{-\frac{(x+y)^2}{4t}}
\right).
\]

The logarithmic derivatives of this kernel satisfy

\[
\frac{K_x}{K}
=
\frac{y}{2t}
\coth
\left(
\frac{xy}{2t}
\right)
-\frac{x}{2t},
\]

and

\[
\frac{K_{xx}}{K}
=
\frac1{4t^2}
\left(
-2t+x^2+y^2
-2xy\coth
\left(
\frac{xy}{2t}
\right)
\right).
\]

Introducing the normalized kernel measure

\[
d\mu_{x,t}(y)
=
\frac{
K_D(x,y,t)g(y)
}{
w(x,t)
}dy ,
\]

we may write

\[
l_x
=
\mathbb E_{\mu_{x,t}}
\left[
\frac{Y}{2t}
\coth
\left(
\frac{xY}{2t}
\right)
-\frac{x}{2t}
\right].
\]

Hence the Hamilton variable

\[
v
=
xl_x+\frac{x^2}{2t}
\]

admits the representation

\[
\boxed{
v
=
\frac{x}{2t}
\mathbb E_{\mu_{x,t}}
\left[
Y
\coth
\left(
\frac{xY}{2t}
\right)
\right].
}
\]

This formula provides a probabilistic interpretation of Hamilton's
auxiliary quantity. The quantity \(v\), originally introduced
through a maximum principle argument, is precisely the normalized
expectation of the boundary interaction term generated by the
reflected Gaussian kernel.

Furthermore,

\[
\coth^2 z-1=\csch^2 z
\]

shows that the nonlinear correction in Hamilton's estimate is
related to the fluctuation term

\[
\mathbb E_{\mu_{x,t}}
\left[
Y^2\csch^2
\left(
\frac{xY}{2t}
\right)
\right].
\]

Therefore the function

\[
F(v)=\frac{u}{\sinh u},
\qquad
v=\frac{u}{\tanh u},
\]

can be interpreted as a sharp nonlinear compression of the kernel
fluctuation generated by the reflected heat kernel.

This suggests the following conjectural refinement.

\begin{conjecture}[Kernel representation of Hamilton's sharp correction]

For the one-dimensional Dirichlet heat equation, the exact quantity

\[
l_{xx}
+\frac1{2t}
+\frac1{x^2}F^2(v)
\]
admits a representation as a nonnegative variance-type term with
respect to the normalized Dirichlet heat kernel measure
\(\mu_{x,t}\).

\end{conjecture}

\section{Conclusion}

In this paper, we established an explicit boundary-corrected
Li--Yau type estimate for the Dirichlet heat equation on the
Euclidean half-space.

The main observation is that the reflected Gaussian structure of the
Dirichlet heat kernel generates a hyperbolic correction term in the
logarithmic derivative. By introducing the normalized kernel measure

\[
d\mu_{x,t}(y)
=
\frac{
K_D(x,y,t)g(y)
}{
w(x,t)
}
dy ,
\]

we transform derivatives of the solution into expectations of
explicit kernel quantities.

Using Jensen's inequality and the sharp hyperbolic estimate

\[
0<z\csch z<1,
\]

we obtain

\[
\boxed{
\Delta\log w(x,t)
\geq
-\frac n{2t}
-\frac1{x_n^2}
}.
\]

The estimate separates two fundamental contributions:

\[
-\frac n{2t},
\]

which represents the universal diffusion scaling of the heat
equation, and

\[
-\frac1{x_n^2},
\]

which represents the additional influence of the absorbing boundary.

Unlike the classical Li--Yau estimate, the present formulation
provides an explicit quantitative description of how the boundary
modifies diffusion. The inverse-square correction is naturally
interpreted as the square of the inverse distance to the boundary.

Future directions include extending this kernel-based approach to
general domains, manifolds with boundary, and other diffusion
operators where curvature and boundary geometry interact.


\end{document}